\newtheorem{theorem}{Theorem}
\newtheorem{proposition}[theorem]{Proposition}
\newtheorem{lemma}[theorem]{Lemma}
\newtheorem{claim}[theorem]{Claim}
\newtheorem{notation}[theorem]{Notation}
\newtheorem{question}[theorem]{Question}
\newtheorem{observation}[theorem]{Observation}
\newtheorem{definition}[theorem]{Definition}
\begin{document}

\title{On Convex Geometric Graphs with no $k+1$
Pairwise Disjoint Edges}

\author{
Chaya Keller and Micha A. Perles \\
Einstein Institute of Mathematics, Hebrew University\\
Jerusalem 91904, Israel\\
}

\maketitle

\begin{abstract}

A well-known result of Kupitz from 1982 asserts that the maximal
number of edges in a convex geometric graph (CGG) on $n$ vertices
that does not contain $k+1$ pairwise disjoint edges is $kn$ (provided
$n>2k$). For $k=1$ and $k=n/2-1$, the extremal examples are completely
characterized. For all other values of $k$, the structure of the extremal
examples is far from known: their total number is unknown, and only a few
classes of examples were presented, that are almost symmetric,
consisting roughly of the $kn$ ``longest possible'' edges of $CK(n)$, the
complete CGG of order $n$.

In order to understand further the structure of the extremal examples, we
present a class of extremal examples that lie at the other end of the
spectrum. Namely, we break the symmetry by requiring that, in addition,
the graph admit an independent set that consists of $q$ consecutive
vertices on the boundary of the convex hull. We show that such graphs
exist as long as $q \leq n-2k$ and that this value of $q$ is optimal.

We generalize our discussion to the following question: what is the
maximal possible number $f(n,k,q)$ of edges in a CGG on $n$ vertices
that does not contain $k+1$ pairwise disjoint edges, and, in addition,
admits an independent set that consists of $q$ consecutive vertices
on the boundary of the convex hull? We provide a complete answer to
this question, determining $f(n,k,q)$ for all relevant values of $n,k$
and $q$.
\end{abstract}

\section{Introduction}

A {\it geometric graph} (GG) is a graph whose vertices are points in
general position in the plane, and whose edges are segments
connecting pairs of vertices. If the vertices are in convex position
(i.e., no vertex lies in the convex hull of the remaining vertices),
the graph is called a {\it convex geometric graph} (CGG).


One of the oldest Tur\'{a}n-type questions for geometric graphs, mentioned already
by Erd\H{o}s~\cite{Erdos46} in 1946, is: what is the maximal number of
edges in a geometric graph on $n$ vertices that does not contain $k+1$
{\it pairwise disjoint} edges? This question, along with the ``dual'' question
concerning geometric graphs that do not contain $k+1$ {\it pairwise crossing}
edges, became central research topics in geometric combinatorics, with
dozens of papers obtaining upper and lower bounds and resolving special
cases (see, e.g.,~\cite{ACFFHHHW10,AA89,CP92,Cerny05,DKM02,Felsner,FPS13}).

For the sake of brevity, we call a GG that does not contain $k+1$ pairwise disjoint edges
\emph{an $I_{k+1}$-free graph}, and a GG that does not contain $k+1$ pairwise crossing edges
\emph{an $X_{k+1}$-free graph}.

For general GGs, the questions of determining the maximal possible number of edges in an $I_{k+1}$-free
graph and in an $X_{k+1}$-free graph are still widely open. For $I_{k+1}$-free GGs, the best
currently known upper bound is $2^8 k^2 n$, obtained by Felsner~\cite{Felsner}, and the
best lower bound is $\frac{3}{2}(k-1)n-2k^2$, obtained by T\'{o}th and Valtr~\cite{TV99}.
For $X_{k+1}$-free GGs, the best known upper bound is $2^{ck^6} n \log n$, obtained by
Fox, Pach, and Suk~\cite{FPS13}. For both questions, it is conjectured that the
correct answer is of order $\Theta(kn)$ (see~\cite{Pach15}).

Much more is known in the convex case. For $X_{k+1}$-free CGGs, Capoyleas and
Pach~\cite{CP92} showed in 1992 that the maximal possible number of edges is
$2kn - {{2k+1}\choose{2}}$, for any $n \geq 2k+1$. It was also shown
that any $X_{k+1}$-free CGG that is maximal with respect to inclusion (i.e., addition
of any edge gives rise to $k+1$ pairwise crossing edges) contains exactly
$2kn - {{2k+1}\choose{2}}$ edges~\cite{DKM02}, and the exact number of extremal examples was
shown to be equal to the determinant of a certain matrix of Catalan numbers~\cite{Jonsson05}.
Strong upper bounds were obtained also for
related questions, such as determining the maximal possible number of edges in a CGGs that does
not contain a $k$-grid (i.e., two families of $k$ edges each such that each edge in the first
family crosses each edge in the other family), see~\cite{AFPS14,BMP}.

For $I_{k+1}$-free CGGs, Kupitz~\cite{Kupitz} showed in 1982 that the maximal possible number
of edges is $kn$ (for $n \geq 2k+1$) or ${{n}\choose{2}}$ (for $0 \leq n \leq 2k+1$). In the
cases $k=1$ (for all $n$) and $k=n/2-1$ (for even $n$), the exact
characterization of all \emph{maximal} $I_{k+1}$-free CGGs (i.e., all $I_{k+1}$-free CGGs with
exactly $kn$ edges) is known. For $k=1$, Woodall~\cite{Woodall} showed that the maximal CGGs are all
self-intersecting circuits of odd order with inward pointing leaves. This characterization holds
also for general (i.e., not necessarily convex) geometric graphs, and is
related to Conway's {\it thrackle conjecture} (see~\cite{BMP}, p.~306).
For $k=n/2-1$ , a characterization was given recently in~\cite{Keller-Perles}. The extremal graphs
turn out to be complements of trees of a special structure called {\it caterpillar trees}
(see~\cite{Caterpillar1}).

As for characterization of the maximal $I_{k+1}$-free CGGs for $1< k < n/2-1$, the knowledge is very
scarce. Unlike the case of maximal $X_{k+1}$-free CGGs, the number of maximal $I_{k+1}$-free CGGs is
not known, and only a few examples of such CGGs were given (see, e.g.,~\cite{Felsner}).
Moreover, all known examples are of a very specific kind, containing roughly the
$kn$ edges of maximal order in $CK(n)$, the complete CGG on $n$ vertices (see Section~\ref{sec:notations}
for the formal definition of ``order''). If $CK(n)$ is represented by a regular polygon $P$ of order $n$
with all its diagonals, such graphs are almost fully symmetric about
the center of the polygon, and they do not contain an independent set of more than $\lceil n/2 \rceil -k$
consecutive vertices on the boundary of $P$.

\medskip

In this paper, we seek to broaden our acquaintance with the family of maximal $I_{k+1}$-free CGGs.
To this end, we consider examples that are as far as possible from being centrally
symmetric. Formally, we ask how large can be $q$, such that there exists a maximal $I_{k+1}$-free
graph that contains an independent set of $q$ consecutive vertices on the boundary of $P$. Such
a restriction forces the graph to be far from symmetric, as demonstrated in
Figure~\ref{Fig:1New}. We show by an explicit class of examples that $q$ can be as large as
$n-2k$ (i.e., twice as large as the value of $q$ for the previously known examples), and
prove that this value is maximal.

We obtain this result as a special case of a more general result concerning $I_{k+1}$-free
graphs with a large independent set of consecutive boundary vertices.

\begin{definition}
Let $G$ be a CGG. A set $A \subset V(G)$ is a \emph{free boundary arc
of order $q$} if:
\begin{itemize}
\item $A$ consists of $q$ consecutive vertices on the boundary of $\mathrm{conv} (V(G))$,

\item $A$ is independent, i.e., no edge of $G$ connects two vertices of $A$.
\end{itemize}
\end{definition}

\noindent We often say that $G$ \emph{avoids} $A$, when $A$ is an independent set of
vertices of $G$.

The general question we address can be formulated as follows:
\begin{question}
What is the maximal number of edges in an $I_{k+1}$-free CGG on $n$ vertices that
admits a free boundary arc of order $q$?
\end{question}

We provide a complete answer in the following theorem:
\begin{theorem}\label{Thm:Main}
Let $n,k,$ and $q$ be integers such that $1 \leq k \leq \lfloor n/2 \rfloor -1$
and $1 \leq q \leq n-1$.
Denote by $f(n,k,q)$ the maximal number of edges in an $I_{k+1}$-free CGG on $n$
vertices that admits a free boundary arc of order $q$. Then:
\begin{enumerate}
\item If $q \leq n-2k$, then $f(n,k,q)=kn$. In particular, there exists a \emph{maximal}
$I_{k+1}$-free CGG that admits a free boundary arc of order $q$.

\item If $q=n-2k+ \ell$ for $0 < \ell < k$, then $f(n,k,q)=
kn- {{\ell+1}\choose{2}}$.

\item If $q \geq n-k$, then $f(n,k,q)= {{n}\choose{2}} -
{{q}\choose{2}}$.
\end{enumerate}
\end{theorem}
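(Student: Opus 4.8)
The plan is to fix the free arc $A=\{1,\dots,q\}$ and write $B=V\setminus A=\{q+1,\dots,n\}$, $b:=|B|=n-q$; since $A$ is independent, every edge meets $B$. The first observation, which disposes of the two extreme regimes, is that any pairwise disjoint family of edges is in particular vertex-disjoint, so each of its edges uses a distinct vertex of $B$, and hence such a family has at most $b$ edges. When $q\ge n-k$ we have $b\le k$, so the $I_{k+1}$-condition is automatic and the only restriction is that $A$ be independent; taking all edges except the $\binom{q}{2}$ internal ones gives the value $\binom{n}{2}-\binom{q}{2}$ of item~(3). At the other end, item~(1) is almost immediate: its upper bound $kn$ is Kupitz's theorem, and for the matching lower bound it suffices to exhibit a single $I_{k+1}$-free CGG with $kn$ edges and a free arc of order exactly $n-2k$, since every shorter arc is then free as well.

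For the two remaining directions I first record the combinatorial engine. Placing the vertices in cyclic order $1,\dots,n$, a short case check shows that two $A$--$B$ edges $\{a,u\}$ and $\{a',u'\}$ with $a<a'$ in $A$ are disjoint precisely when $u>u'$ in $B$; equivalently, identifying each $A$--$B$ edge with the grid point $(a,u)\in A\times B$, a set of pairwise disjoint $A$--$B$ edges is exactly an antichain of the product order. A general pairwise disjoint family therefore splits as an antichain of $A$--$B$ edges together with a non-crossing matching inside the contiguous block $B$, subject to a compatibility (non-crossing) condition; moreover whether a $B$--$B$ edge crosses an $A$--$B$ edge $\{a,u\}$ depends only on $u$, not on the position of $a$ in the arc. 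This reduces all disjointness bookkeeping to the poset structure on $A\times B$ together with non-crossing matchings inside $B$.

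I would prove the upper bound of item~(2) (where $k<b<2k$ and $\ell=2k-b$) by induction on $n$, deleting a vertex of $A$ of degree at most $k$; removing any single arc vertex leaves $q-1$ consecutive boundary vertices of the new hull, so the deleted vertex need not be an endpoint. Such a low-degree vertex exists whenever $q$ is large: if every vertex of $A$ had at least $k+1$ neighbours in $B$, then the number $e_{AB}$ of $A$--$B$ edges would satisfy $e_{AB}\ge (k+1)q$, while the absence of an antichain of size $k+1$ forces, by Dilworth's theorem, a cover of the $A$--$B$ edges by $k$ chains, each of length at most $q+b-1$, so $e_{AB}\le k(q+b-1)$; these are incompatible once $q>k(b-1)$. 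Deleting such an arc vertex reduces the instance to $(n-1,k,q-1)$ with the same $b$ and $\ell$, and the inductive value $k(n-1)-\binom{\ell+1}{2}$ plus the degree $\le k$ yields exactly $kn-\binom{\ell+1}{2}$. Only the finitely many instances with $q\le k(b-1)$, i.e.\ $n<(k+1)b$, remain as base cases, to be settled by a direct argument.

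The heart of the matter, and the step I expect to be hardest, is the construction realizing these bounds; this is also the part the abstract advertises. For item~(1) with $b=2k$ and for item~(2) with $k<b<2k$ I would build the extremal graph to match the inductive structure above: fix a small ``core'' graph on $B$ together with a few arc vertices, and then attach the remaining arc vertices one at a time, each joined to exactly $k$ suitably chosen (for instance consecutive) vertices of $B$ so that no new pairwise disjoint family of size $k+1$ is created. Using the decomposition of the second paragraph, verifying $I_{k+1}$-freeness reduces to checking that every antichain of $A$--$B$ edges, when combined with a compatible non-crossing matching inside $B$, has total size at most $k$; the delicate point is choosing the neighbourhoods and the core so that this holds while the edge count reaches $kn$ (respectively $kn-\binom{\ell+1}{2}$). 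I expect the bulk of the work to lie in making this choice explicit, and in the accompanying case analysis for the base cases of the induction.
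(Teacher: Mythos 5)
Your reductions for item~(3) and for the upper bound of item~(1) match the paper, and your poset reformulation of disjointness for $A$--$B$ edges is correct. But the proposal has a genuine gap exactly where you predict the difficulty lies: the extremal constructions for items~(1) and~(2) are never actually produced. ``Attach the arc vertices one at a time, each joined to $k$ suitably chosen consecutive vertices of $B$'' is a plan, not a proof; no graph is defined and no verification of $I_{k+1}$-freeness is carried out, and this is the bulk of the paper's content (Sections~3--4). Note also that the intended shape is not quite the one you describe: in the paper's extremal graphs $G_{n,k}$ and $G_{n,k,\ell}$ the edge set is organized by \emph{direction} (each of the $n$ directions contributes a block of $k$ consecutive parallel edges, chosen as centrally as the free arc permits), so many edges live entirely inside $B=A^c$ rather than joining $A$ to $B$; a graph in which each arc vertex is joined to $k$ vertices of $B$ would already have about $kq>kn-k\cdot b$ such edges and could not have the right direction profile. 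The paper's $I_{k+1}$-freeness proof then rests on a quantitative lemma (every edge leaves at least $m-1$, and in half the cases at least $m$, unused vertices ``behind'' it, where $m=\lfloor(n-2k)/2\rfloor$), followed by a counting contradiction: a putative set of $k+1$ disjoint edges would leave at least $2m-1$ vertices untouched while only $2m-2$ are available. None of this, or a substitute for it, appears in your proposal.

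Your upper bound for item~(2) is a genuinely different route from the paper's, and it is attractive: the paper fixes the free arc and computes, direction by direction, the number of edges ``lost'' (directions at angular distance $\epsilon\le\ell$ from the arc's axis admit only $k-\lceil(\ell-\epsilon)/2\rceil$ allowed edges), summing the losses to exactly $\binom{\ell+1}{2}$ via the identity $\lceil \ell/2\rceil+2\sum_{i=1}^{\ell-1}\lceil i/2\rceil=\binom{\ell+1}{2}$; you instead induct on $n$ by deleting a degree-$\le k$ arc vertex, with Dilworth's theorem guaranteeing such a vertex once $q>k(b-1)$. The induction step is sound (the deleted vertex need not be an arc endpoint, and the parameters $b$ and $\ell$ are preserved), but it bottoms out at the cases $n<(k+1)(2k-\ell)$, which you leave ``to be settled by a direct argument.'' Since these base cases grow with $k$ (up to $n\approx 2k^2$), they are not a finite check and cannot be waved away; the paper's direction-counting handles all $n$ uniformly and would be the natural way to close this hole. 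As it stands, both the lower bounds in items~(1)--(2) and the base cases of your upper bound for item~(2) are unproved.
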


The proof that the stated values are upper bounds for $f(n,k,q)$ is quite straightforward. The
proof of the equalities is somewhat more complex, and involves a construction of an asymmetric
maximal $I_{k+1}$-free CGG. To illustrate the construction, two examples of CGGs with $kn$ edges
that admit a free boundary arc of order $n-2k$ are given in Figure~\ref{Fig:1New}.

\begin{figure}[tb]
\begin{center}
\scalebox{1.0}{
\includegraphics{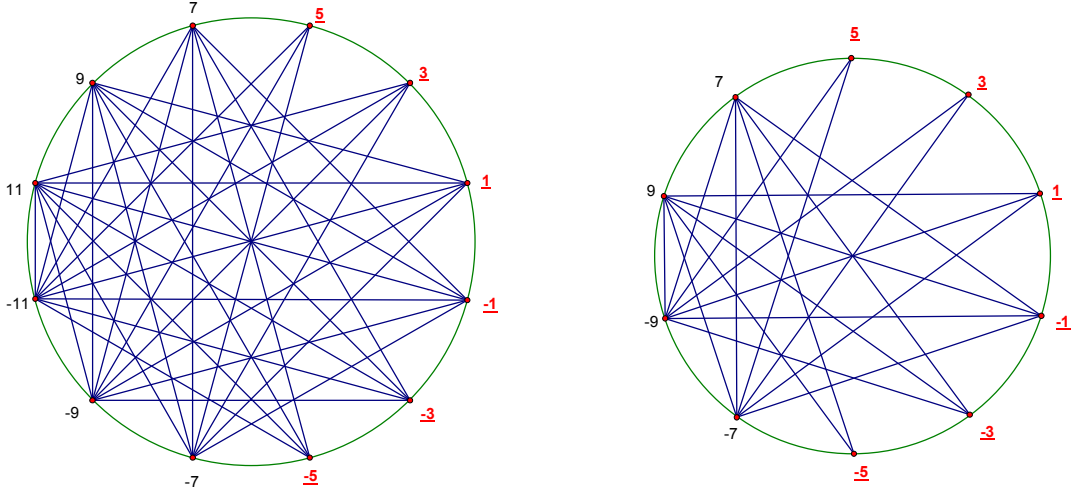}
}
\caption{Two maximal CGGs that admit a free boundary arc of order $n-2k$. On the left,
$n=12$ and $k=3$, and on the right, $n=10$ and $k=2$. The avoided vertices are
colored red and underlined.}
\label{Fig:1New}
\end{center}
\end{figure}

This paper is organized as follows: In
Section~\ref{sec:notations} we introduce a few definitions and
notations, and prove the upper bound of Theorem~\ref{Thm:Main}. In
Section~\ref{sec:basic-construction} we present a family
of maximal $I_{k+1}$-free graphs $G_{n,k}$ that avoid $q=n-2k$
consecutive vertices, thus proving Theorem~\ref{Thm:Main}(1). In
Section~\ref{sec:extension} we modify the construction to produce $I_{k+1}$-free graphs
$G_{n,k,\ell}$ that avoid $q=n-2k+\ell$
consecutive vertices, and use
this construction to complete the proof of Theorem~\ref{Thm:Main}.







\section{Preliminaries}
\label{sec:notations}

We start with a few definitions and notations, which are a bit unusual but
will be very convenient for the proofs in the sequel.


\subsection{Definitions and Notations}
\label{sec:sub:notations}

Let $n$ be a positive integer, $n \geq 4$. In order to deal with a convex geometric
graph $G=(V(G),E(G))$ of order $n$ with a free arc $A$, we consider a regular
polygon $P$ of order $2n$, whose vertices lie on a circle with center $O$ and are
numbered cyclically from $-n+1$ to $n$, as shown in Figure~\ref{Fig:2New}. We shall
always insist that the free arc $A$ \textbf{is} symmetric with respect to the horizontal
axis through $O$ and lie on the right. Thus, if $|A|$ is even, then we let
$V(G)$ be the set of odd-labelled vertices of $P$, and if $|A|$ is odd, we let $V(G)$
be the set of even-labelled vertices of $P$. The segment connecting the endpoints of
$A$ will always be vertical.

\begin{figure}[tb]
\begin{center}
\scalebox{1.0}{
\includegraphics{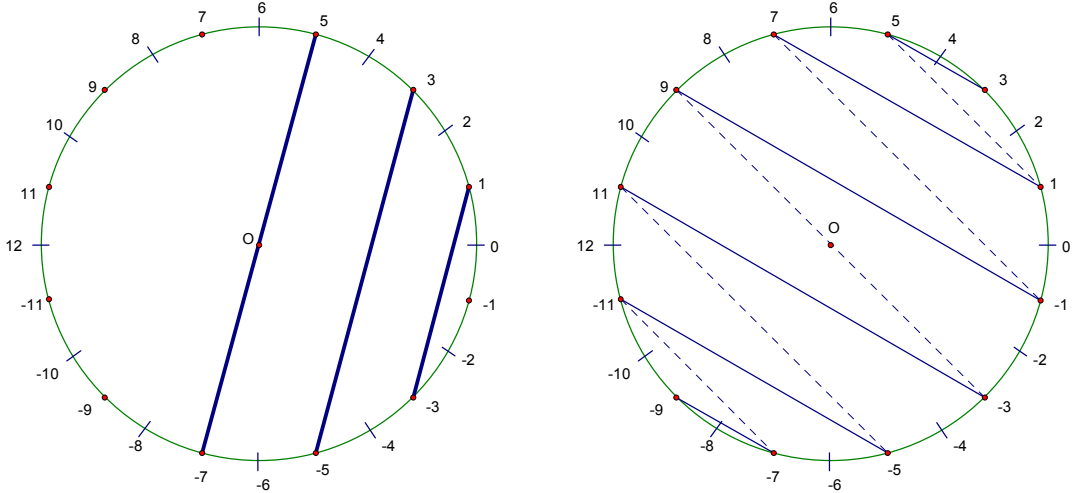}
} \caption{Representation of a CGG $G$ on $n=12$ vertices using a regular polygon $P$
on $24$ vertices, in the case of even $|A|$. The vertices of $G$ are the odd-labelled vertices of $P$. On
the right, all possible edges of direction $4$ are drawn as ordinary
lines and all possible edges of direction $3$ are drawn as dotted lines. On the left,
the set of bold edges is $B_{-1,5}$ (assuming $k=3$).}
\label{Fig:2New}
\end{center}
\end{figure}

Given the set $A$ avoided by $G$, we say that an edge $e$ is {\it allowed} if at
least one of its endpoints lies in $A^c$ (i.e., $V(G) \setminus A$). Otherwise,
$e$ is called {\it forbidden}. We also call the vertices of $A$ {\it avoided} or
{\it forbidden} vertices, and the vertices of $A^c$ {\it allowed} vertices.

We define the {\it order} of an edge $[i,j] \in E(G)$ to be
$ord([i,j])= \frac{1}{2} \min(|i-j|, 2n - |i-j|)$. Note that each edge $e \in E(G)$
divides the boundary of $P$ into two open arcs. If $ord(e)=r$, then these two
arcs contain $r-1$ and $n-r-1$ vertices of $G$, respectively.

We say that an edge $e = [i,j] \in E(G)$ {\it emanates
from} its left vertex, according to the labelling in
Figure~\ref{Fig:2New}. Formally, $e$ emanates from $i$ if
$|i| > |j|$. In the case $i=-j$, we say that the edge $e$
emanates from the positive-labelled vertex.

For each vertex $v$ of $P$, we consider the set of edges on $V(G)$ that
are perpendicular to the radius $\overrightarrow{Ov}$. The edges of this
set are parallel. If $n$ is odd, then their number is $(n-1)/2$. If $n$ is
even, then their number is $n/2$ for $v \in \mathrm{vert}(P) \setminus V(G)$
and $n/2-1$ for $v \in V(G)$. We say that these are the edges {\it in
direction $v$.} Note that in this notation, the vertices labelled
$i$ and $i-n$ (for any $1 \leq i \leq n$) correspond to the same direction.
See the right part of Figure~\ref{Fig:2New} for an illustration of the
directions in a CGG on $12$ vertices.

A pair of parallel edges on $V(G)$ is called {\it consecutive} if each vertex
of one edge is adjacent on the boundary of $V(G)$ to a vertex of the other edge.
Throughout the paper, we make use of sequences of
consecutive parallel edges (i.e., sequences of the form $\langle e_1,e_2,\ldots,e_{\ell}
\rangle$, where $e_i,e_{i+1}$ are consecutive for each $1 \leq i \leq \ell-1$). Of course,
all edges in such a sequence belong to the same direction.
For each such sequence $S$ belonging to the direction $i$, the endpoints of the extremal
edges of $S$ determine two disjoint open arcs on the boundary of $P$ that contain all
the remaining vertices of $G$.
We denote the arc that contains the vertex $i$ by $Arc_{S,i}$ and the other
arc by $Arc'_{S,i}$. The following notation will be important in the sequel.
\begin{notation}\label{Not:New}
For a direction $i$, and for $0 \leq j \leq n-2k$, let $B_{i,j}$ denote the
set $S$ of $k$ consecutive edges on $V(G)$ in direction $i$, for which exactly $j$
vertices of $G$ lie in $Arc'_{S,i}$. (This implies: $|V(G) \cap Arc_{s,i}| = n-2k-j$,
and therefore restricts the parity of $j$: we have $j \equiv n (\mod 2)$ if and only if
$i$ is \textbf{not} a label of a vertex of $G$.)

\end{notation}
For example, in the left part of Figure~\ref{Fig:2New}, the set of edges depicted
in bold is $S=B_{-1,5}$, and the two corresponding arcs on the boundary of $P$ are
$Arc_{S,-1}$ (containing the vertex $-1$) and $Arc'_{S,-1}$ (containing the vertices
$7,9,11,-11,-9$). We note that, by its definition, the set $B_{i,j}$ depends also
on $n$ and $k$. In order to keep the notation concise, we prefer to leave this
dependence implicit, as the values of $n,k$ can always be understood from the context.

\subsection{An Upper Bound on the Number of Edges}
\label{sec:sub:upper}

\subsubsection{Upper bound for $q \leq n-2k$}

It is clear that all the edges in each direction are pairwise disjoint.
Hence, if a set of edges does not contain $k+1$ pairwise disjoint
edges, then it contains at most $k$ edges in each direction. As
the number of distinct directions is $n$, we get the following observation
(that was already used by Kupitz~\cite{Kupitz}):
\begin{observation}\label{Obs1}
Let $G$ be an $I_{k+1}$-free CGG of order $n$. Then $|E(G)| \leq kn$.
Moreover, if $|E(G)|=kn$, then $E(G)$
contains exactly $k$ edges in each direction.
\end{observation}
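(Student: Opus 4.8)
The plan is to prove both parts by a single counting argument over the $n$ directions. First I would observe that the edges of $G$ fall into $n$ disjoint classes according to their direction: every edge $[i,j]$ is perpendicular to exactly one radius $\overrightarrow{Ov}$ (recalling from Section~\ref{sec:notations} that the labels $v$ and $v-n$ name the same direction, so there are precisely $n$ directions), and this assignment partitions $E(G)$. The key geometric fact to record is that any two distinct edges belonging to the same direction are disjoint: as parallel chords of the circle on which the vertices of $P$ lie, they neither share an endpoint nor cross. Hence the edges in any single direction form a family of pairwise disjoint edges.

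Next I would invoke the hypothesis that $G$ is $I_{k+1}$-free. Since a direction containing $k+1$ edges would exhibit $k+1$ pairwise disjoint edges, each of the $n$ directions contains at most $k$ edges of $G$. Writing $d_i$ for the number of edges in direction $i$, we have $d_i \le k$ for every $i$, and because the directions partition $E(G)$,
\[
|E(G)| = \sum_{i} d_i \le kn,
\]
which is the first assertion.

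For the ``moreover'' part I would argue by the rigidity of this inequality: if $|E(G)| = kn$, then $\sum_i d_i = kn$ is a sum of $n$ terms each bounded above by $k$, so every term must attain its maximum, i.e.\ $d_i = k$ for all $i$. Thus $E(G)$ contains exactly $k$ edges in each direction.

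I do not expect any genuine obstacle here; the statement is a direct pigeonhole and double-counting consequence of the direction decomposition. The only points requiring care are the two structural facts used implicitly---that each edge has a well-defined unique direction, so that the classes truly partition $E(G)$, and that parallel chords are pairwise disjoint---both of which are immediate from the regular-polygon model set up in Section~\ref{sec:notations}.
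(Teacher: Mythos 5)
Your proof is correct and follows essentially the same argument as the paper: edges partition into $n$ direction classes, parallel edges in one class are pairwise disjoint so each class has at most $k$ edges, and summing gives $|E(G)|\le kn$ with equality forcing exactly $k$ per direction. The paper states this in two sentences; your write-up only spells out the same pigeonhole/double-counting reasoning in more detail.
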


This implies the upper bound of Theorem~\ref{Thm:Main}(1).

\subsubsection{Upper bound for $n-2k < q < n-k$}

Assume now that, in addition, $G$ avoids a set $A$ of $q=n-2k+\ell$
consecutive vertices, for $1 \leq \ell < k$. In this case, the
maximal possible number of edges is reduced, since, in some of the
directions, only fewer than $k$ edges have at least one endpoint in
$A^c$. For $0 < s \leq k$, we say that direction $j$ {\it loses}
$s$ edges, if it contains exactly $k-s$ edges with at least one endpoint in $A^c$, and
denote this situation by $Loss(j)=s$. (If the number of edges in direction $j$ with
at least one endpoint in $A^c$ is greater than or equal to $k$, we say that $Loss(j)=0$.)
In the proof, we use the following fact.
\begin{claim}\label{Claim:Triv}
Let $n \in \mathbb{N}$. Then
\[
\lceil n/2 \rceil + 2 \sum_{i=1}^{n-1} \lceil i/2 \rceil = {{n+1}\choose{2}}.
\]
\end{claim}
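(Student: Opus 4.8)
The plan is to prove the identity by induction on $n$, treating the left-hand side as an explicit function of $n$ and exploiting the fact that the running sum telescopes cleanly under the increment $n \mapsto n+1$. Write $L(n) = \lceil n/2 \rceil + 2\sum_{i=1}^{n-1}\lceil i/2\rceil$ for the left-hand side. For the base case $n=1$ the sum is empty, so $L(1) = \lceil 1/2\rceil = 1 = \binom{2}{2}$, and the claim holds.

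For the inductive step I would compute the forward difference $L(n+1) - L(n)$ directly. Passing from $n$ to $n+1$ changes the leading term from $\lceil n/2\rceil$ to $\lceil (n+1)/2\rceil$ and appends exactly one new term $2\lceil n/2 \rceil$ to the doubled sum, so $L(n+1) - L(n) = \lceil (n+1)/2\rceil + \lceil n/2\rceil$. The one genuine sub-step is the elementary identity $\lceil (n+1)/2\rceil + \lceil n/2\rceil = n+1$, valid for every integer $n$: this is checked in two lines by splitting into the cases $n$ even and $n$ odd, since in either case one of $n, n+1$ contributes its exact half and the other contributes its half rounded up. On the right-hand side, $\binom{n+2}{2} - \binom{n+1}{2} = \binom{n+1}{1} = n+1$. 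The two forward differences agree, which together with the base case completes the induction.

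There is no real obstacle here; the only point requiring care is the parity bookkeeping for the ceiling function, which is entirely routine. As an alternative that avoids induction, one can first establish the closed form $\sum_{i=1}^{n-1}\lceil i/2\rceil = \lfloor n^2/4\rfloor$ (again by a short parity case analysis, since $\lceil i/2\rceil$ runs through $1,1,2,2,3,3,\dots$), substitute it into $L(n)$, and verify $L(n) = \binom{n+1}{2}$ separately for $n$ even and $n$ odd. Either route reduces the claim to a finite parity check, so I would present the inductive argument as the cleanest version and relegate the parity casework to a single displayed line.
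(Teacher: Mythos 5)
Your proof is correct and follows exactly the route the paper indicates: the authors state that the claim is proved ``by induction on $n$'' and leave the details to the reader, and your forward-difference computation $L(n+1)-L(n)=\lceil (n+1)/2\rceil+\lceil n/2\rceil=n+1=\binom{n+2}{2}-\binom{n+1}{2}$ is precisely the routine verification they had in mind.
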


The proof, by induction on $n$, is straightforward and is left to the reader.

\medskip



Let the allowed set $A^c$ be as described in the right part of Figure~\ref{Fig:3New},
where $\pm x$ are its rightmost vertices. (Note that $x$ is a function of $n,k,$ and $\ell$.)
We compute the number of allowed edges in each direction.

\begin{figure}[tb]
\begin{center}
\scalebox{1.0}{
\includegraphics{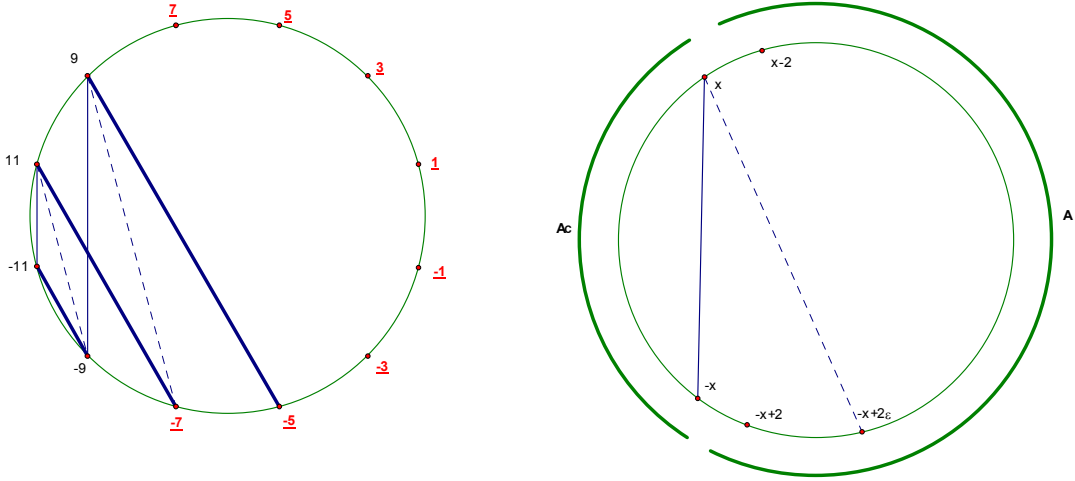}
} \caption{An illustration to the upper bound proof. The right drawing illustrates
the proof for general parameters. The left drawing contains the special case
$n=12$, $k=3$, and $\ell=2$. The avoided vertices are colored red and
underlined. The allowed edges in direction $0$ are drawn as ordinary lines, the
allowed edges in direction $1$ are drawn as punctured lines, and the allowed edges
in direction $2$ are drawn as bold lines.}
\label{Fig:3New}
\end{center}
\end{figure}

The endpoints of each edge in direction $0$ are either both allowed or both
forbidden. Hence, the number of allowed edges in direction $0$ is
$\lfloor (2k-\ell)/2 \rfloor = k- \lceil \ell/2 \rceil$, or in other words,
$Loss(0)= \lceil \ell/2 \rceil$. (Note that if $\ell$ is odd, the leftmost vertex
in $A^c$ is idle.)

In direction $\epsilon$, $0 < \epsilon \leq \ell$, the edge $e=[-x+2\epsilon,x]$ is allowed,
and so are all edges that are parallel to it and behind it (i.e., farther than $e$ from $\epsilon$).
Thus, effectively, the allowed arc is extended by $\epsilon$ vertices on its bottom side,
and hence, the number of allowed edges in this direction is $\lfloor (2k-\ell+\epsilon)/2
\rfloor$. Hence, $Loss(\epsilon) = \lceil (\ell-\epsilon)/2 \rceil$.

When $\epsilon$ reaches $\ell$, the effective length of the allowed arc becomes $2k-\ell+\ell=2k$,
and thus, the loss becomes zero. By symmetry, the situation for direction $-\epsilon$ is the
same as for direction $\epsilon$.

Therefore, the sum of the losses is
\[
\lceil \ell/2 \rceil + 2 (\lceil (\ell-1)/2 \rceil + \lceil (\ell-2)/2 \rceil + \ldots
+ \lceil 0/2 \rceil) = \ell(\ell+1)/2,
\]
where the equality holds by Claim~\ref{Claim:Triv}.

The argument above is exemplified in the left drawing of Figure~\ref{Fig:3New} in the
special case $n=12,k=3,\ell=2$. In direction $0$
there are only 2 allowed edges, which corresponds to $Loss(0)=\lceil \ell/2 \rceil =1$.
The same holds for $\epsilon=1$, and this corresponds to $Loss(1) = \lceil (\ell-\epsilon)/2
\rceil = \lceil (2-1)/2 \rceil =1$. For $\epsilon=2=\ell$, there are already 3 allowed
edges, which indeed corresponds to $Loss(2)=\lceil (\ell-2)/2 \rceil =0$.


\subsubsection{Upper and lower bounds for $q \geq n-k$}

Finally, we consider the case where $G$ avoids a set $A$ of $q
\geq n-k$ consecutive vertices. In this case, each edge of $G$
uses at least one of the vertices of $A^c$, and since $|A^c| \leq
k$, this implies that $G$ is $I_{k+1}$-free. Hence, the only
restriction on $G$ is the avoidance of $A$,
which leads to the upper bound ${{n}\choose{2}} - {{q}\choose{2}}$
on the number of edges (as there are ${{q}\choose{2}}$ edges with
both endpoints in $A$).

On the other hand, the upper bound is clearly attained by the graph
$G$ that contains all ${{n}\choose{2}} - {{q}\choose{2}}$ edges
with at least one endpoint in $A^c$, since this graph is $I_{k+1}$-free
and avoids $A$. This proves Theorem~\ref{Thm:Main}(3).

\section{Maximal CGGs Avoiding $n-2k$ Consecutive Vertices}
\label{sec:basic-construction}

In this section we consider $I_{k+1}$-free CGGs on $n$ vertices that
avoid a set $A$ of
$n-2k$ consecutive vertices. By Observation~\ref{Obs1}, the number
of edges in such a graph is at most $kn$. We show that this upper
bound is attained by a sequence of graphs $G_{n,k}$,
thus proving Theorem~\ref{Thm:Main}(1).

In Sections~\ref{sec:sub:def-Gnk}
and~\ref{sec:sub:basic-proof} we present the construction of
$G_{n,k}$ for even values of $n$ and prove that these CGGs satisfy
the conditions of Theorem~\ref{Thm:Main}(1). In Section~\ref{sec:sub:basic-odd}
we sketch the modifications in the construction of $G_{n,k}$ and in
the proof of the theorem required in the case of odd $n$.

\subsection{Definition of the Graphs $G_{n,k}$ for Even $n$}
\label{sec:sub:def-Gnk}

Let $n$ be an even integer. Define $m=(n-2k)/2$, and let the free set be
$A=\{\pm 1, \pm 3, \ldots, \pm (2m-1)\}$.

Recall that by the definition of {\it direction} given in
Section~\ref{sec:sub:notations}, directions $i$ and $i-n$ (for any
$1 \leq i \leq n$) are the same.
Hence, in this section we restrict ourselves to the set of $n$ directions
that correspond to the half-circle consisting of the consecutive vertices
$\{-m,-m+1,\ldots,0,1,\ldots,m+2k-2,m+2k-1,m+2k\}$ (where direction
$m+2k$ is equal to direction $-m$).

The CGG $G_{n,k}$ that avoids $A$ is given by the following
definition.

\begin{definition}\label{Def:Gnk}
For an even integer $n$, and $k \leq \frac{n}{2}-1$, we denote by
$G_{n,k}$ the CGG on $n$ vertices whose edge set consists of the
following sets of consecutive edges, arranged according to the directions:
\begin{itemize}
\item For $-m \leq j \leq m$, the set of edges in direction $j$ is $S_j=B_{j,|j|}$.
(See Notation~\ref{Not:New}.)

\item For $j=m+i$, $0 \leq i \leq 2k$, the set of edges in direction $m + i$ is
$S_{m+i}=B_{m+i,m-\epsilon}$, where
\[
\epsilon = \left\lbrace
  \begin{array}{c l}
    0, & \mbox{if  } 2 \mid i,  \\
    1, & \mbox{if  } 2 \nmid i.
  \end{array}
  \right.
\]
\end{itemize}
\end{definition}
\noindent Note that the sets $S_m$ and $S_{m+2k}$ are defined twice: $S_m$ appears
as $B_{m,m}$ in both clauses, while $S_{m+2k}$ appears as $S_{-m}=B_{-m,m}$ in the
first clause and as $S_{m+2k}=B_{m+2k,m}$ in the second clause (since $-m$ and $m+2k$
are the same direction).

\medskip

As the structure of $G_{n,k}$ is a bit complex, we give an
intuitive explanation of the construction before presenting the
proof. Our explanation is illustrated by two examples of
$G_{n,k}$, presented in Figure~\ref{Fig:5New}.

\begin{figure}[tb]
\begin{center}
\scalebox{1.0}{
\includegraphics{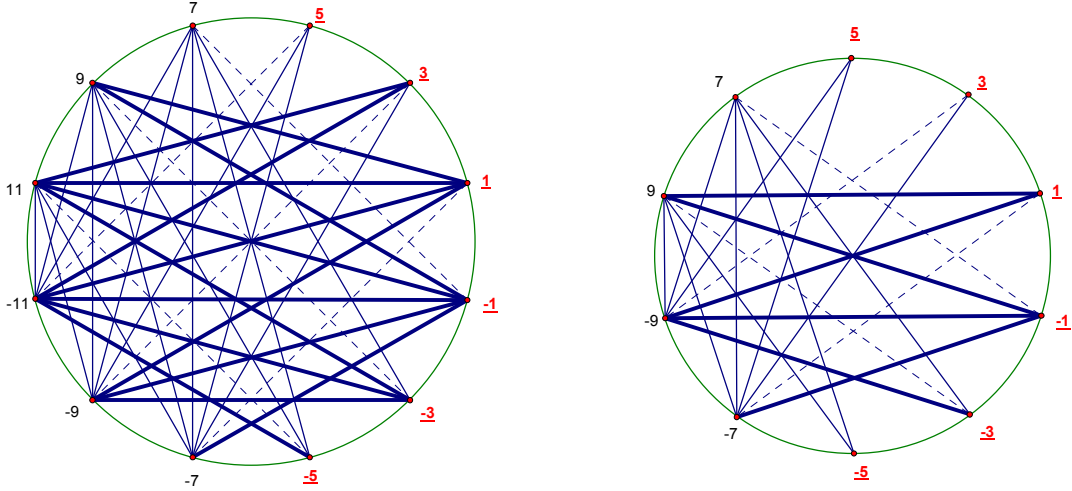}
} \caption{Examples of the CGG $G_{n,k}$. On the left, $n=12$ and $k=3$.
On the right, $n=10$ and $k=2$. In both graphs, the avoided vertices are
colored red and underlined. The edges of $S_j$ for $|j|<m$ (where $m=(n-2k)/2$)
are drawn as ordinary lines, the edges of $S_j$ for $m<j<m+2k$ are drawn as bold lines, and
the edges of $S_m,S_{m+2k}$ are drawn as punctured lines.}
\label{Fig:5New}
\end{center}
\end{figure}

\medskip

\noindent \textbf{Intuitive explanation of the construction.} We
consider the directions sequentially, in the order
$0,1,\ldots,m+2k-1,-m,-m+1,\ldots,-1,0$. (Note that direction $-m$ is the
same as direction $m+2k$, and thus, the order of directions we consider is
logical.) In each direction, we choose $k$ edges,
aiming at choosing the ``most central'' (i.e., longest possible)
edges, subject to the restriction that $G_{n,k}$ must avoid $A$.
The endpoints of the edges in
each direction $j$ form two sets of $k$ consecutive
vertices (see Figure~\ref{Fig:5New}). We call the set which is closer
to the leftmost vertex ``the left set'' and denote it by $L_j$,
and call the other set ``the right set'' and denote it by
$R_j$.\footnote{Formally, $L_j$ is the set whose distance to the
vertex labelled $n$ is smaller, where the distance $dist(S,v)$ of a set
of vertices $S$ from a single vertex $v$ is defined as
$dist(S,v)=\min_{s \in S} ord([s,v])$. A single exception is
direction $0$ for which the distances of both sets from vertex $n$ are equal.
For that direction, we choose for convenience $L_0=\{n-1,\ldots,n-(2k-1)\}$
and $R_0=\{-(n-1),\ldots,-(n-2k+1)\}$.}

In the first $m+1$ directions,
the left set $L_j=\{n-1,n-3,\ldots,n-(2k-1)\}$ is fixed, while the right set $R_j$ is moved each step
one place counterclockwise (i.e., starting with
$\{-(n-1),-(n-3),\ldots,-(n-2k+1)\}$, then
$\{-(n-3),-(n-5),\ldots,-(n-2k-1)\}$, etc.). We continue in this fashion until
$j=m$, for which $S_m=B_{m,m}$ is central (i.e., each of the arcs $Arc_{S_{m},m},
Arc'_{S_{m},m}$ contains exactly $m$ vertices). In the following directions, we try
to maintain centrality: Every second direction is central, and the directions
in between are ``nearly central'', i.e., moved half a step downward from the
central positions. This is achieved by moving $L_j$ and $R_j$ alternately:
In passing from $S_{m}$ to $S_{m+1}$, $L_m$ is moved one step counterclockwise,
while $R_m$ remains unchanged. In passing from $S_{m+1}$ to $S_{m+2}$, $R_{m+1}$
is moved one step counterclockwise, while $L_{m+1}$ is retained, etc.
This continues until we reach direction $m+2k$ (again central), which is the same
as direction $-m$. The transition from here back to direction $0$ can be viewed
as a mirror image (in reverse) of the transition from $S_0$ to $S_m$. In the last
step (from $S_{-1}$ back to $S_0$) $R_{-1}$ is moved one step counterclockwise to
$R'_{-1}$, and then the roles of $R$ and $L$ are interchanged: $L_0=R'_{-1}$ and
$R_0=L_{-1}$.

\subsection{Proof of Theorem~\ref{Thm:Main}(1) for Even $n$}
\label{sec:sub:basic-proof}

In this section we prove that $G_{n,k}$ satisfies the conditions
of Theorem~\ref{Thm:Main}(1). It is clear that $G_{n,k}$ contains
exactly $k$ edges in each direction\footnote{Note that in each direction, there are
at least $k$ allowed edges. Indeed, by the definition of $A$, the set $A^c$ contains $2k$
vertices. Hence, at least $k$ of the edges in each direction have an endpoint in $A^c$.}
and avoids the set of vertices $A=\{-2m+1,-2m+3,\ldots,2m-3,2m-1\}$. Hence,
Theorem~\ref{Thm:Main}(1) is implied by the following proposition.
\begin{proposition}\label{Prop:Basic-Main}
For each $n$ and $k$, the graph $G_{n,k}$ is $I_{k+1}$-free.
\end{proposition}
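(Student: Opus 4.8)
The plan is to prove the contrapositive: any family of pairwise disjoint edges of $G_{n,k}$ has at most $k$ members. First I would record the standard disjointness criterion for a CGG: two edges are disjoint exactly when one of them lies entirely inside one of the two open boundary arcs cut off by the other (equivalently, their four endpoints on the boundary of $P$ are distinct and do not interleave cyclically). I would then exploit the clean global picture produced by the construction: the free arc $A$ occupies a contiguous block of $n-2k$ vertices on the right, while $A^c$ forms a single contiguous block $L$ of $2k$ vertices on the left, straddling the vertex labelled $n$. Since $G_{n,k}$ avoids $A$, every edge has at least one endpoint in $L$. A naive count does not suffice here: a non-crossing ``reversing'' matching between two separated arcs can already contain $\min(2k,\,n-2k)$ edges, far more than $k$. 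Hence the argument must genuinely use that $G_{n,k}$ contains, in each direction, only the $k$ \emph{central} edges prescribed by Definition~\ref{Def:Gnk}, and not all allowed edges.

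For the concrete mechanism, note that by Observation~\ref{Obs1} any pairwise disjoint family uses at least two distinct directions (parallel edges in a single direction number only $k$ and are handled directly). I would attach to each edge $e$ its direction $j$ together with the position of its band, encoded by the number $c_j$ of vertices in $Arc'_{S_j,j}$, that is $c_j=|j|$ for $|j|\le m$ and $c_j=m-\epsilon$ for the upper directions. The heart of the proof is a monotonicity lemma: if $e$ and $f$ are disjoint edges lying in directions $j\le j'$, then the arc-containment criterion, combined with the explicit, steadily advancing band positions $c_j$, forces a strict inequality between suitable boundary coordinates of $e$ and $f$. Iterating this along the members of a pairwise disjoint family sorted by direction, I expect to obtain a single monotone potential that advances by at least one unit per edge, while the centrality of the construction caps the number of attainable values of this potential at $k$; a disjoint family therefore has at most $k$ edges.

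The step I expect to be the main obstacle is establishing the monotonicity lemma rigorously, because of the circular, wrap-around nature of the construction. Specifically, the sweep of the bands is only ``every second direction central'' for the upper directions $m\le j\le m+2k$, so the parity parameter $\epsilon$ must be tracked carefully; the sets $S_m$ and $S_{m+2k}$ are each defined twice and $-m$ is identified with $m+2k$, so the chosen coordinate must remain single-valued across this identification; and disjointness between an edge in a direction near $0$ and one in a direction near $m+2k$ crosses the ``seam'' of the construction, where the left and right sets $L_j,R_j$ interchange roles (the transition from $S_{-1}$ to $S_0$ described after Definition~\ref{Def:Gnk}). I would organize the verification by grouping directions into the lower block $|j|\le m$ and the upper block $m\le j\le m+2k$, proving the lemma within each block and then across the two seams, and finally carry out the analogous bookkeeping for odd $n$ in Section~\ref{sec:sub:basic-odd}.
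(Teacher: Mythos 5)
Your setup is sound (the representation of $A$ and $A^c$, the observation that every edge must meet the block $L=A^c$ of $2k$ vertices, and the correct diagnosis that a naive count only gives $2k$ rather than $k$), but the proof has a genuine gap: everything is delegated to a ``monotonicity lemma'' that is never precisely stated, and the mechanism it is supposed to drive does not exist in the form you describe. First, you never exhibit a potential with only $k$ attainable values: the direction index takes $n$ values, your band coordinate $c_j$ takes roughly $m+1$ values, and the position of an edge within its $k$-block does take $k$ values but there is no reason it should advance strictly across disjoint edges in different directions. Second, and more fundamentally, a family of pairwise disjoint chords of a convex polygon is \emph{laminar}, not linearly ordered: fixing the reference boundary edge $]n-1,-(n-1)[$ as in Notation~\ref{Not:Arc}, two disjoint edges $e,f$ satisfy either $Arc_e\subseteq Arc_f$, or $Arc_f\subseteq Arc_e$, or $Arc_e\cap Arc_f=\emptyset$. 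The third case genuinely occurs here --- one edge can emanate from the positive-labelled half of $A^c$ and cut off an arc near the top of the polygon while another emanates from the negative-labelled half and cuts off an arc near the bottom --- and for such a pair no single boundary coordinate is ``ahead'' for one of them, so a sweep sorted by (cyclic) direction cannot extract a strictly increasing quantity from it. This is not a corner case to be patched at the ``seam'': it is the heart of the matter.

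For contrast, the paper's proof runs on a counting mechanism rather than a sweep. It first proves Lemma~\ref{Lemma:Basic-Main}: for every $e\in E(G_{n,k})$ the open arc $Arc_e$ contains at least $m-1$ vertices of $G$, and at least $m$ if $e$ emanates from a positive-labelled vertex (this is where the centrality of the bands $B_{j,|j|}$ and $B_{m+i,m-\epsilon}$ is used). Then, given $k+1$ pairwise disjoint edges, it finds one edge with no negative-labelled endpoint in $A^c$ and one with no positive-labelled endpoint in $A^c$; their arcs fall into the ``disjoint'' branch of the laminar alternative, and taking an extremal (innermost) edge of the family behind each one produces two disjoint arcs containing at least $m$ and $m-1$ vertices, respectively, none of which can be used by the family. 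Since $k+1$ disjoint edges use $2k+2$ vertices and only $n-(2k+2)=2m-2$ vertices remain, the $2m-1$ guaranteed unused vertices give the contradiction. If you want to rescue your plan, you would essentially need to run two separate monotone sweeps, one into the top arc and one into the bottom arc, and then combine them by exactly this kind of vertex count --- at which point you have reproduced the paper's argument.
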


In the proof of Proposition~\ref{Prop:Basic-Main}, we use a lemma,
which requires some additional notation.
\begin{notation}\label{Not:Arc}
The endpoints of any edge $e \in E(G_{n,k})$ divide the boundary of $P$ into two
open arcs. Denote the arc that does not contain the open boundary edge
$]n-1,-(n-1)[$ by $Arc_e$, and the other arc by $Arc'_e$. We say that an edge
$e_2$ \emph{lies behind} $e_1$ (with respect to $]n-1,-(n-1)[$) if
$Arc_{e_2} \subseteq Arc_{e_1}$. (Note that an edge is said to lie behind itself.)
\end{notation}

Note that the notations $Arc_e$ and $Arc'_e$ differ from the
notations $Arc_{S_j,j}$ and $Arc'_{S_j,j}$ defined in
Section~\ref{sec:sub:notations}. This difference is intentional, and
both types of notation will be used in the sequel.

\begin{lemma}\label{Lemma:Basic-Main}
For any $e \in E(G_{n,k})$, the open arc $Arc_e$ contains at least
$m-1$ vertices of $G$. Furthermore, if $e$ emanates from a positive-labelled
vertex, then $Arc_e$ contains at least $m$ vertices of $G$.
\end{lemma}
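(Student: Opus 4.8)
The plan is to reduce both assertions to a single elementary inequality on edge lengths, and then verify that inequality direction by direction. The crucial observation is that the number of vertices in $Arc_e$ depends only on the length of $e$. List the $n$ vertices of $G$ (the odd-labelled vertices of $P$) as $v_1,\dots,v_n$ in counterclockwise order starting immediately after the vertex $n$ of $P$, so that $v_p$ carries the label $2p-n-1$ and the vertex $n$ lies in the gap between $v_n$ and $v_1$. For an edge $e=[a,b]$ with $a,b\in\{-(n-1),\dots,n-1\}$, the arc $Arc_e$ is by definition the one avoiding this gap; writing $a=2p-n-1$ and $b=2q-n-1$ with $p<q$, it consists exactly of $v_{p+1},\dots,v_{q-1}$, so that, with $c(e):=|Arc_e\cap V(G)|$,
\[
c(e)\;=\;q-p-1\;=\;\tfrac12\,|a-b|-1 .
\]
Hence the lemma is equivalent to the statement that every edge $e=[a,b]$ of $G_{n,k}$ satisfies $|a-b|\ge 2m$, with $|a-b|\ge 2m+2$ whenever $e$ emanates from a positive-labelled vertex. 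I would also record at the outset that $G_{n,k}$ is invariant under the reflection $\rho:t\mapsto-t$ of $P$ about the horizontal axis: since $\rho$ sends direction $j$ to direction $-j$ and $B_{i,j}$ to $B_{-i,j}$, it sends $S_j\mapsto S_{-j}$ and $S_{m+i}\mapsto S_{m+2k-i}$. As $\rho$ fixes the gap at $n$, it preserves $c(e)$ while interchanging \emph{emanates from a positive vertex} with \emph{emanates from a negative vertex}.

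Next I would dispose of the directions $0\le j\le m$. Here the $k$ edges are $e_t=[\,n-2t+1,\,-(n-2j-2t+1)\,]$ for $t=1,\dots,k$, so $|a-b|=2n-2j-4t+2$ and $c(e_t)=n-j-2t$; the minimum over $t\le k$ and $j\le m$ is attained at $t=k,\ j=m$ and equals $n-m-2k=m$. Moreover each $e_t$ has its positive endpoint $n-2t+1$ as the outer one (or is of the form $[a,-a]$ when $j=0$), so every edge in these directions emanates from a positive vertex and already meets the stronger bound $c\ge m$. Applying $\rho$, the directions $-m\le j<0$, together with $j=m+2k\equiv -m$, carry only negative-emanating edges, each with $c\ge m\ge m-1$; these meet the required (weaker) bound, and there are no positive-emanating edges among them to check.

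The heart of the proof is the range $m<j<m+2k$, i.e.\ $j=m+i$ with $1\le i\le 2k-1$. I would make the informal alternating-shift description precise in the position coordinates above: passing from $S_m$ to $S_{m+i}$ advances the block $L$ by $\lceil i/2\rceil$ positions and the block $R$ by $\lfloor i/2\rfloor$ positions counterclockwise (each step raising the relevant positions by $1$ modulo $n$). Writing $\alpha=\lceil i/2\rceil$, $\beta=\lfloor i/2\rfloor$ and $\epsilon=\alpha-\beta\in\{0,1\}$, one determines the correct pairing of left and right endpoints into the $k$ chords and finds two families: $\alpha$ chords with $c=m-\epsilon+2s-2$ for $s=1,\dots,\alpha$, and $k-\alpha$ chords with $c=m+2k+\epsilon-2s$ for $s=\alpha+1,\dots,k$. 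The overall minimum of $c$ equals $m-\epsilon$, attained only at $s=1$ of the first family; that chord has both endpoints negative (its labels are $2\alpha-n-1$ and $-(2k-2\beta-1)$, both $<0$), hence is negative-emanating, so it need only meet the bound $m-1$, which it does. Every other chord in this range has $c\ge m$, and in particular so does every positive-emanating chord. Combining the three ranges yields $c(e)\ge m-1$ for all edges and $c(e)\ge m$ for all edges emanating from a positive vertex, as claimed.

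I expect the main obstacle to be the bookkeeping in this middle range: one must follow the two endpoint-blocks through the alternating, wrap-around shifts, fix the pairing that produces the actual chords, and read off both $|a-b|$ and the sign of the outer endpoint of each chord, taking care that the unique shortest chord (the one realizing $c=m-1$, which occurs exactly when $i$ is odd) is precisely the chord with two negative endpoints. Turning the intuitive shift description of the construction into the exact position recursion, and verifying the reflection invariance of $G_{n,k}$, are the supporting technical steps on which this computation rests.
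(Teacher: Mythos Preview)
Your approach is correct, but it differs from the paper's proof in a meaningful way.

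The paper argues by arc containment rather than by explicit length computation. In Case~I ($-m\le j\le m$) it observes directly from the definition of $S_j=B_{j,|j|}$ that $Arc'_{S_j,j}$ contains $|j|\le m$ vertices, hence $Arc_{S_j,j}$ contains at least $n-2k-|j|\ge m$; since $Arc'_{S_j,j}$ is the arc through $]n-1,-(n-1)[$, every edge $e\in S_j$ satisfies $Arc_e\supset Arc_{S_j,j}$, giving $c(e)\ge m$ with no distinction between positive- and negative-emanating edges. In Case~II ($m<j<m+2k$) the paper reads off from $S_{m+i}=B_{m+i,m-\epsilon}$ that the two block-arcs have sizes $m-\epsilon$ and $n-2k-(m-\epsilon)=m+\epsilon$, and then proves a single geometric fact: an edge $e\in S_{m+i}$ emanates from a positive vertex if and only if the vertex $m+i$ and the open edge $]n-1,-(n-1)[$ lie on opposite sides of $e$, which is exactly the condition $Arc_e\supset Arc_{S_{m+i},m+i}$. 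This one observation handles every edge in the middle range at once.

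Your route---the formula $c(e)=\tfrac12|a-b|-1$, the reflection symmetry of $G_{n,k}$, and the explicit tracking of $L$- and $R$-blocks through the alternating shifts---is sound and gives sharper information (exact $c$-values rather than lower bounds, and the precise identification of the unique shortest edge in each odd-$i$ direction). The cost is the bookkeeping you flag in your final paragraph: pairing the wrapped $L$-block with $R$ and checking the sign of the outer endpoint edge by edge. The paper avoids this entirely by never writing down a single label in Case~II; the ``positive-emanating $\Leftrightarrow$ $Arc_e\supset Arc_{S_{m+i},m+i}$'' criterion replaces your two-family computation with one sentence. Your reflection-symmetry remark is a nice observation the paper does not make explicit, but it is not needed once one argues via arc containment, since Case~I already covers $-m\le j\le m$ uniformly.
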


\begin{proof}
We consider separately the sets of edges in $S_j$ for $-m \leq j \leq m$ and for
$m+1 \leq j \leq m+2k-1$.

\medskip

\textbf{Case I: $e \in S_j$ for $-m \leq j \leq m$.} By Definition~\ref{Def:Gnk},
we have $S_j=B_{j,|j|}$. Recall that, as explained in Section~\ref{sec:sub:notations},
$S_j=B_{j,|j|}$ divides the remaining vertices on the boundary of
$\mathrm{conv} (V(G))$ into two arcs: $Arc'_{S_j,j}$ (which includes the open edge
$]n-1,-(n-1)[$) and $Arc_{S_j,j}$. As by the definition of
$B_{j,|j|}$, $Arc'_{S_j,j}$ contains $|j| \leq m$ vertices of $G$, it
follows that $Arc_{S_j,j}$ contains at least $n-2k-|j| \geq m$
vertices of $G$. Since for each $e \in S_j$, the arc $Arc_e$
includes the arc $Arc_{S_j,j}$, it follows that $Arc_e$ contains
at least $m$ vertices of $G$.

\medskip

\textbf{Case II: $e \in S_j$ for $m+1 \leq j \leq m+2k-1$.} By
Definition~\ref{Def:Gnk}, $S_j = S_{m+i}=B_{m+i,m-1}$ for
odd $i$ and $S_j= S_{m+i}=B_{m+i,m}$ for even $i$. For $S_{m+i}=B_{m+i,m}$,
each of the arcs $Arc_{S_{m+i},m+i}$ and $Arc'_{S_{m+i},m+i}$ contains
exactly $m$ vertices of $G$, and thus, for any $e \in B_{m+i,m}$, the
arc $Arc_e$ contains at least $m$ vertices of $G$. Similarly, for
$S_{m+i}=B_{m+i,m-1}$, the arcs $Arc_{S_{m+i},m+i}$ and
$Arc'_{S_{m+i},m+i}$ contain $m+1$ and $m-1$ vertices of $G$
respectively, and thus, for any  $e \in B_{m+i,m-1}$, the arc $Arc_e$
contains at least $m-1$ vertices of $G$.

When trying to show that, for some of the edges, the arc $Arc_e$ contains at
least $m$ vertices of $G$, we face a difficulty: Unlike the case $-m \leq j \leq m$,
where for any $e \in S_j$ the arc $Arc_e$ includes the arc $Arc_{S_j,j}$, here
there is a variance between the edges. For some of the edges $e$,
$Arc_e$ includes $Arc_{S_{m+i},m+i}$ (which contains $m+1$ vertices of
$G$), while for the other edges, $Arc_e$ includes $Arc'_{S_{m+i},m+i}$ (which
contains only $m-1$ vertices of $G$).

However, we observe that the edges $e$ for which $Arc_e$ includes
$Arc_{S_{m+i},m+i}$ are exactly those which emanate from
positive-labelled vertices (see Figure~\ref{Fig:7New}). Indeed, if $e \in S_{m+i}$
emanates from a positive-labelled vertex, then the vertex $m+i$ and the open edge
$]-(n-1),n-1[$ lie on opposite sides of the edge $e$. Thus, by the definitions
of $Arc_e$ and of $Arc_{S_{m+i},m+i}$, it follows that $Arc_{S_{m+i},m+i} \subset
Arc_{e}$. On the other hand, if $e$ emanates from a negative-labelled vertex,
then $m+i$ and $]-(n-1),n-1[$ lie on the same side of $e$, and thus,
$Arc'_{S_{m+i},m+i}$ is included in $Arc_{e}$.
Hence, we conclude that for any $e \in E(G_{n,k})$ that emanates from a
positive-labelled vertex, $Arc_e$ contains at least $m$ vertices of $G$. This
completes the proof of the lemma.
\end{proof}

\begin{figure}[tb]
\begin{center}
\scalebox{1.0}{
\includegraphics{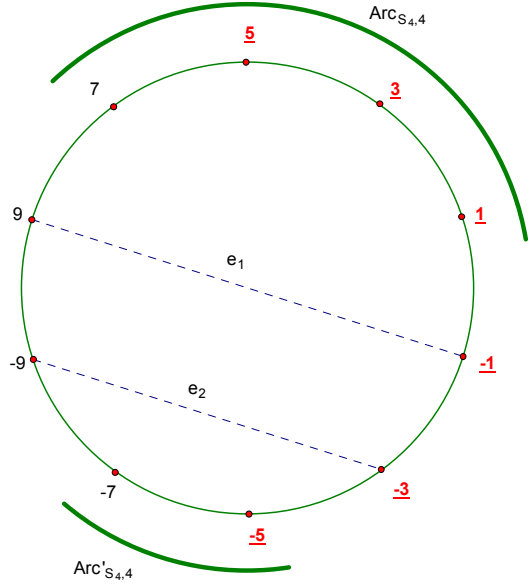}
}
\caption{An illustration for the proof of Lemma~\ref{Lemma:Basic-Main},
corresponding to $n=10$ and $k=2$. The avoided vertices are colored red and underlined.
For the edge $e_1= [9,-1]$, the arc $Arc_{e_1}$ includes $Arc_{S_4,4}$, while
for the edge $e_2=[-9,-3]$, the arc $Arc_{e_2}$ does not include $Arc_{S_4,4}$.}
\label{Fig:7New}
\end{center}
\end{figure}

\begin{proof}(of Proposition~\ref{Prop:Basic-Main})

Assume, on the contrary, that $G_{n,k}$ contains a set $S$
of $k+1$ pairwise disjoint edges. By the construction of $G_{n,k}$, each
edge of $S$ has at least one endpoint in the set
\[
A^c=\{\pm (n-1), \pm (n-3), \ldots, \pm (n-2k+1)\},
\]
that consists of $k$ positive-labelled vertices and $k$
negative-labelled vertices (see Figure~\ref{Fig:7.5}).
As the edges of $S$ are disjoint, and there are $k+1$ edges in $S$, there exist
two edges $[i_1,t_1],[i_2,t_2] \in S$, such that:
\begin{itemize}
\item $[i_1,t_1]$ does not have a negative-labelled endpoint in $A^c$. In the
notations of Figure~\ref{Fig:7.5}, $[i_1,t_1]$ does not have an endpoint in
$D_3$, so it must have an endpoint in $D_2$. Moreover, $Arc_{[i_1,t_1]}
\subset D_1 \cup D_2$.

\item $[i_2,t_2]$ does not have a positive-labelled endpoint in $A^c$.
In the notations of Figure~\ref{Fig:7.5}, $[i_2,t_2]$ does not have an
endpoint in $D_2$, so it must have an endpoint in $D_3$. Moreover,
$Arc_{[i_2,t_2]} \subset D_1 \cup D_3$.
\end{itemize}

It follows that none of the arcs $Arc_{[i_1,t_1]},Arc_{[i_2,t_2]}$ is
included in the other, and therefore they are disjoint (see
Figure~\ref{Fig:7.5}).

\begin{figure}[tb]
\begin{center}
\scalebox{1.0}{
\includegraphics{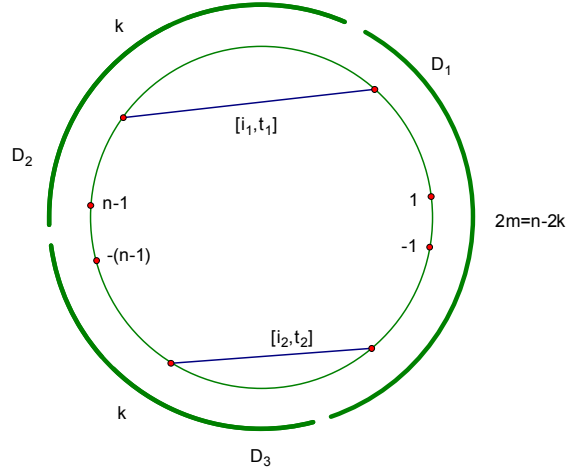}
}
\caption{An illustration for the proof of Proposition~\ref{Prop:Basic-Main}.
The avoided set $A$ consists of the arc $D_1$ (containing $2m=n-2k$
vertices). The set $A^c$ consists
of two arcs: The arc $D_2$ of $k$ positive-labelled vertices,
and the arc $D_3$ of $k$ negative-labelled vertices. The edges $[i_1,t_1],[i_2,t_2]$
are chosen such that $[i_1,t_1]$ does not have endpoints in $D_3$, and
$[i_2,t_2]$ does not have endpoints in $D_2$.}
\label{Fig:7.5}
\end{center}
\end{figure}


We would like to show that the arc $Arc_{[i_1,t_1]}$ contains at least $m$
vertices of $G$ that are not used by any edge of $S$, and the arc $Arc_{[i_2,t_2]}$
contains at least $m-1$ such vertices. This will be a contradiction to the assumption
$|S|=k+1$, since by the assumption, $S$ uses $2k+2$ vertices of $G$, and thus, only
$n-(2k+2)=2m-2$ vertices of $G$ are not used by $S$.

We say that an edge $e \in S$ is {\it extremal} if no other edge of $S$ lies behind $e$
(see Notation~\ref{Not:Arc}).
It is immediate that behind any edge of $S$ there is an extremal one.
Note that since
the edges of $S$ are pairwise disjoint, if $e'$ is an extremal edge of $S$ then the
vertices of $Arc_{e'}$ are not used by any edge of $S$.

Let $e_1$ and $e_2$ be extremal edges that lie behind $[i_1,t_1]$ and $[i_2,t_2]$,
respectively. (Note that we may have $e_1=[i_1,t_1]$ and/or
$e_2=[i_2,t_2]$.) By Lemma~\ref{Lemma:Basic-Main}, each of the arcs $Arc_{e_1}$ and $Arc_{e_2}$
contains at least $m-1$ vertices of $G$, and these vertices are certainly not used by any
edge of $S$. Moreover, since both endpoints of $e_1$ are contained in
$Arc_{[i_1,t_1]} \subset \{n-1,n-3,\ldots,1,-1,\ldots,-(n-2k-3),-(n-2k-1)\}$, it follows
that $e_1$ emanates from a positive-labelled vertex. (This holds since $e_1$ must emanate
from a vertex in $A^c$, and $Arc_{[i_1,t_1]} \cap A^c$ contains only positive-labelled vertices.)
Hence, Lemma~\ref{Lemma:Basic-Main} actually implies that $Arc_{e_1}$ contains at least $m$
vertices of $G$ that cannot be used by edges of $S$. In total, we have at least $m+(m-1)=2m-1$
vertices of $G$ that cannot be used by edges of $S$ (and these vertices are distinct
since $Arc_{e_1} \cap Arc_{e_2} \subset Arc_{[i_1,t_1]} \cap Arc_{[i_2,t_2]} = \emptyset$).
This leads to a contradiction as explained above, and thus concludes the proof of
Proposition~\ref{Prop:Basic-Main}.
\end{proof}

\subsection{Maximal CGGs Avoiding $n-2k$ Consecutive Vertices for Odd $n$}
\label{sec:sub:basic-odd}

The case of odd $n$ is very similar to the case of even $n$, and is, in some respect, even
easier. Thus, we only
sketch briefly the required modifications in the notations, in the construction
of $G_{n,k}$, and in the proof of Theorem~\ref{Thm:Main}(1).

\medskip

\textbf{Notations.} The first slight change is in the notations.
Let $n$ be odd, and let \textbf{$m=\lfloor (n-2k)/2 \rfloor = (n-2k-1)/2$}. As $|A|=2m+1$ is odd,
we take the vertices of $V(G)$ to be the even-labelled vertices of $P$,
as shown in Figure~\ref{Fig:3Old}. This is in contrast with the case of even $n$,
where $|A|$ is even, and the vertices of $V(G)$ are taken to be the odd-labelled
vertices of $P$ (see Section~\ref{sec:sub:notations}).

\begin{figure}[tb]
\begin{center}
\scalebox{1.0}{
\includegraphics{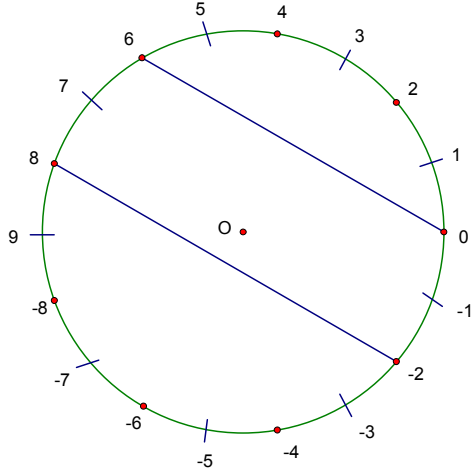}
}
\caption{Representation of a CGG $G$ on $9$ vertices using a regular polygon $P$
on $18$ vertices. The vertices of $G$ are the even-labelled vertices of $P$.
The set of edges shown is $B_{3,3}$.}
\label{Fig:3Old}
\end{center}
\end{figure}

\medskip

\textbf{Construction.} The second change is in the definition of $G_{n,k}$.
While the definition is very similar to the even case, there are a few changes.
The formal definition of $G_{n,k}$ in the odd case is as follows:
\begin{definition}\label{Def:Gnk-odd}
For an odd integer $n$, and $k \leq \lfloor \frac{n}{2} \rfloor-1$, we denote by
$G_{n,k}$ the CGG on $n$ vertices whose edge set is the union of the
following sets of consecutive edges, arranged according to the directions:
\begin{itemize}
\item For $-(m+1) \leq j \leq m+1$, the set of edges in direction $j$ is $S_j=B_{j,|j|}$.

\item For $j=m+i$, $0 \leq i \leq 2k+1$, the set of edges in direction $m + i$ is
$S_{m+i}=B_{m+i,m+\epsilon}$, where
\[
\epsilon = \left\lbrace
  \begin{array}{c l}
    0, & 2 \mid i,  \\
    1, & 2 \nmid i.
  \end{array}
  \right.
\]
\end{itemize}
\end{definition}
(Note that the edges in directions $\pm m, \pm (m+1)$ are defined twice. Of course,
the definitions coincide.) An example of $G_{n,k}$, with $n=13$ and $k=m=3$,
is presented in Figure~\ref{Fig:6New}.

\begin{figure}[tb]
\begin{center}
\scalebox{1.0}{
\includegraphics{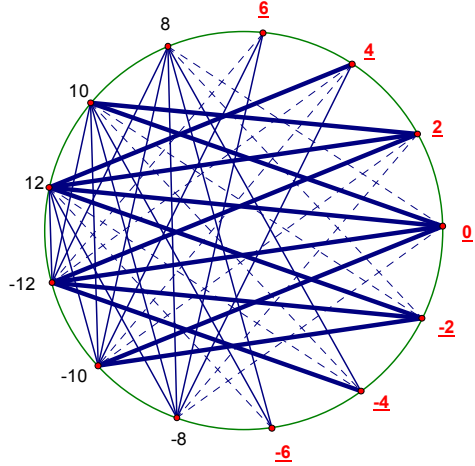}
} \caption{An example of the CGG $G_{n,k}$, for $n=13$ and $k=3$.
The avoided vertices are
colored red and underlined. The sets $S_j$ for $|j|<m$ are drawn as
ordinary lines, the sets $S_j$ for $m+1<j<m+2k$ are drawn as bold lines, and
the sets $S_m,S_{m+1},S_{-m},S_{-m-1}$ are drawn as punctured lines.}
\label{Fig:6New}
\end{center}
\end{figure}

\medskip

\textbf{Main Lemma.} The main lemma used in the proof of Theorem~\ref{Thm:Main}(1)
in the even case, Lemma~\ref{Lemma:Basic-Main}, is replaced by the following
lemma, which turns out to be even stronger:
\begin{lemma}\label{Lemma:Basic-Main-Odd}
For any $e \in E(G_{n,k})$, the open arc $Arc_e$ contains at least
$m$ vertices of $G$.
\end{lemma}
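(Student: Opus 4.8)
The plan is to mirror the case analysis from the proof of Lemma~\ref{Lemma:Basic-Main}, taking advantage of the fact that in the odd case the free arc has \emph{odd} size $|A| = 2m+1 = n-2k$. This parity is precisely what makes the statement stronger and the proof shorter: a central family of edges can never split the remaining vertices into two equal halves, so the two arcs it cuts off have sizes $m$ and $m+1$, and in particular \emph{both} always contain at least $m$ vertices. I would fix $e \in E(G_{n,k})$, write $e \in S_j$, and split according to whether $-(m+1) \le j \le m+1$ (Case~I) or $m+2 \le j \le m+2k-1$ (Case~II); after the identification of directions $i$ and $i-n$ (so that $m+2k \equiv -(m+1)$ and $m+2k+1 \equiv -m$), these two ranges exhaust all $n$ directions.

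In Case~I, Definition~\ref{Def:Gnk-odd} gives $S_j = B_{j,|j|}$. As in the even case, the extremal edges of $S_j$ cut the boundary into $Arc'_{S_j,j}$, which contains the open edge $]n-1,-(n-1)[$ together with exactly $|j|$ vertices of $G$, and $Arc_{S_j,j}$, which contains $n-2k-|j| = (2m+1)-|j|$ vertices of $G$. For every $e \in S_j$ the arc $Arc_e$ (the side away from $]n-1,-(n-1)[$) contains $Arc_{S_j,j}$, and since $|j| \le m+1$ we obtain $(2m+1)-|j| \ge m$. Hence $Arc_e$ holds at least $m$ vertices of $G$.

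Case~II is where the odd case becomes genuinely easier than the even one. Here $S_{m+i} = B_{m+i,\,m+\epsilon}$ with $\epsilon \in \{0,1\}$, so the two arcs $Arc_{S_{m+i},m+i}$ and $Arc'_{S_{m+i},m+i}$ contain $m+\epsilon$ and $(2m+1)-(m+\epsilon) = m+(1-\epsilon)$ vertices of $G$. For either parity of $i$, one arc has $m$ vertices and the other has $m+1$, so both contain at least $m$ vertices. Since $Arc_e$ always includes one of these two arcs, it contains at least $m$ vertices of $G$ \emph{regardless} of which side of $e$ the vertex $m+i$ lies on. This is exactly the step where the even-case proof had to isolate the edges emanating from positive-labelled vertices (getting $m$) from those emanating from negative-labelled vertices (getting only $m-1$); the odd size of $A$ eliminates this distinction, and with it the need for the emanation analysis, yielding the uniform bound.

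I expect the only delicate points — and the closest thing to an obstacle — to be purely organizational rather than estimative. First, the boundary directions $j=m$ and $j=m+1$ are defined by both clauses of Definition~\ref{Def:Gnk-odd}, so I would check that the two definitions agree (they do: each clause assigns $B_{m,m}$ and $B_{m+1,m+1}$). Second, I would verify that the two stated $j$-ranges, under the identification $m+2k \equiv -(m+1)$ and $m+2k+1 \equiv -m$, cover every direction exactly once, which a count confirms ($2m+3$ directions in Case~I and $2k-2$ in Case~II, totalling $2m+2k+1 = n$). Once these are settled, no further estimates are required, since neither case ever produces a count below $m$.
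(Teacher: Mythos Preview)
Your proposal is correct and follows essentially the same approach as the paper's sketch: both observe that for $S_j = B_{j,|j|}$ the argument is identical to the even case, while for $S_{m+i} = B_{m+i,m+\epsilon}$ the two arcs cut off by the $k$-block have sizes $m$ and $m+1$, so no emanation analysis is needed. Your write-up is in fact more detailed than the paper's, which only sketches the argument; the sole cosmetic slip is that in Case~II the count $m+\epsilon$ belongs to $Arc'_{S_{m+i},m+i}$ (by Notation~\ref{Not:New}) rather than to $Arc_{S_{m+i},m+i}$, but since both counts are at least $m$ this does not affect the conclusion.
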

To prove Lemma~\ref{Lemma:Basic-Main-Odd}, we note that in the odd case, a
set of the form $B_{j,m}$ or $B_{j,m+1}$ leaves $m$ and $m+1$ vertices of $G$ on its
sides (compared to $m$ and $m$ or $m-1$ and $m+1$ in the even case). As a result, since
for any $j$ such that $m \leq j \leq m+2k+1$, $S_j$ is of one of the
forms $B_{j,m}$ or $B_{j,m+1}$, it follows immediately that for any
edge $e \in S_j$ (for $m \leq j \leq m+2k+1$), $Arc_e$ contains at least
$m$ vertices of $G$. The proof for $-m < j < m$ is identical
to the proof of the corresponding statement of Lemma~\ref{Lemma:Basic-Main}
in the even case.

\medskip

\textbf{Proof of Theorem~\ref{Thm:Main}(2).} Due to the stronger form of
Lemma~\ref{Lemma:Basic-Main-Odd}, the proof is even simpler than in the even
case. Same as in the even case, we assume on the contrary that $G_{n,k}$ contains
a set $S$ of $k+1$ pairwise disjoint edges, and show that $S$ contains two
edges $[i_1,t_1],[i_2,t_2]$ such that $[i_1,t_1]$ does not have a negative-labelled
endpoint in $A^c$, and $[i_2,t_2]$ does not have a positive-labelled endpoint in
$A^c$. Furthermore, each of the closed arcs $Arc_{[i_1,t_1]},Arc_{[i_2,t_2]}$ contains
the endpoints of an extremal edge (which may be one of the edges $[i_1,t_1],[i_2,t_2]$). We
denote these extremal edges by $e_1$ and $e_2$, respectively. By
Lemma~\ref{Lemma:Basic-Main-Odd},
each of the arcs $Arc_{e_1},Arc_{e_2}$ contains at least $m$ vertices
of $G$ that are not used by edges of $S$. Hence, the number of vertices that
are used by edges of $S$ is at most $n-2m=2k+1$, contradicting the assumption
that $S$ consists of $k+1$ {\it disjoint} edges. This completes the proof of
the theorem for odd $n$.

\section{CGGs Avoiding $n-2k+ \ell$ Consecutive Vertices}
\label{sec:extension}

In this section we consider $I_{k+1}$-free CGGs on $n$ vertices that
admit a free boundary arc
of order $n-2k+\ell$, for $1 \leq \ell <k$. As shown
in Section~\ref{sec:notations}, the number of edges in such a
graph is at most $kn - \ell(\ell+1)/2$. We show that this upper
bound is attained by a sequence of graphs
$G_{n,k,\ell}$, thus proving Theorem~\ref{Thm:Main}(2).

Same as in Section~\ref{sec:basic-construction}, we write $m=\lfloor (n-2k)/2 \rfloor
(=\lfloor n/2 \rfloor -k$),
so that $|A|=2m+\ell$ if $n$ is even, and $|A|=2m+\ell+1$ if $n$ is odd.
As indicated in Section~\ref{sec:sub:notations}, we take $V(G)$ to be the set
of odd-labelled vertices of $P$ if $|A|$ is even, or the set of the even-labelled
vertices of $P$ if $|A|$ is odd. We define $K=A^c$, and divide $K$
into three blocks: $K_+$ (the uppermost $k-\ell$ vertices of $K$), $K_{-}$ (the lowest
$k-\ell$ vertices of $K$), and $K_0$ (the $\ell$ vertices in the middle), see
Figure~\ref{Fig:9NewNew}.

\begin{figure}[tb]
\begin{center}
\scalebox{1.0}{
\includegraphics{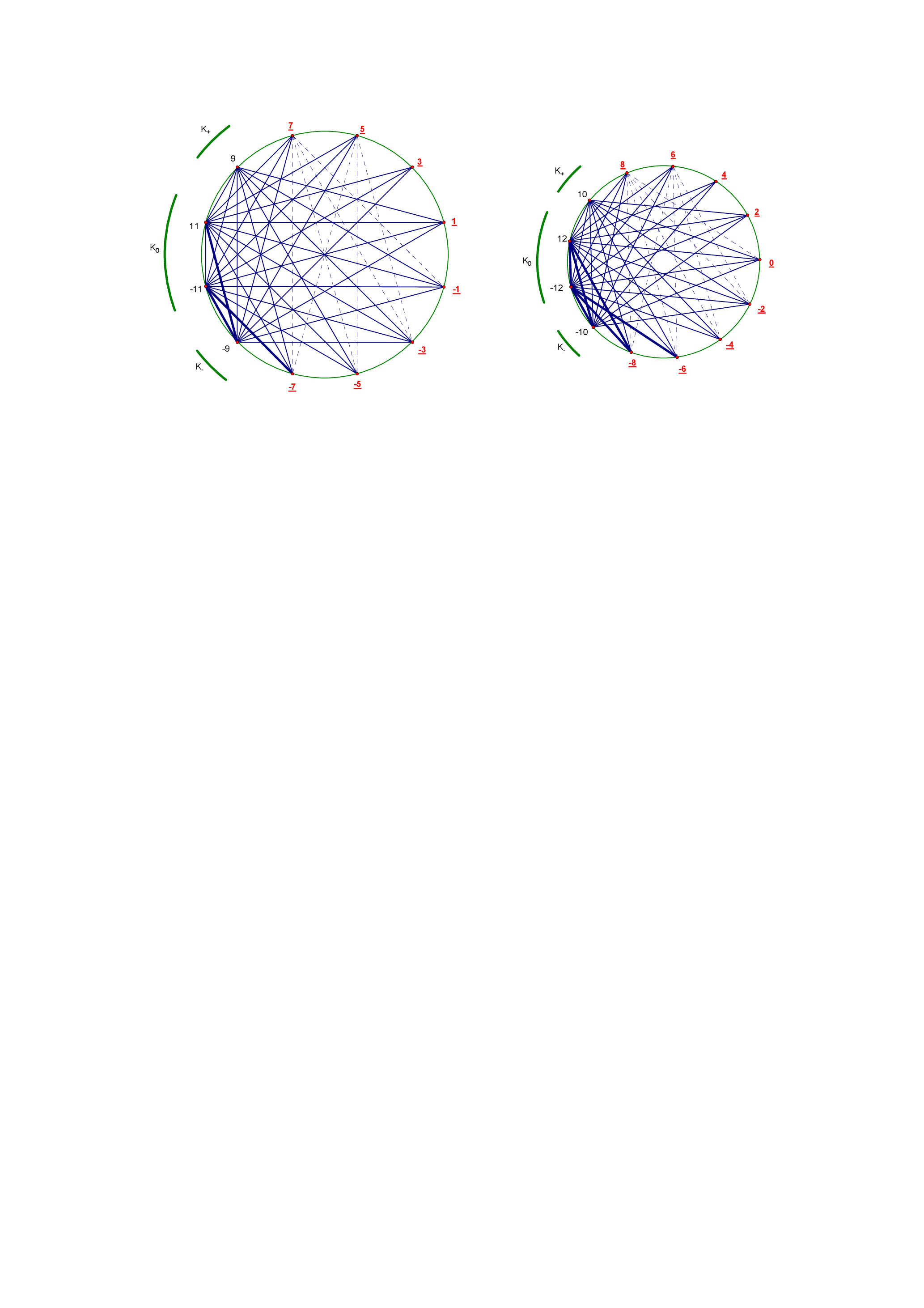}
}
\caption{The CGG $G_{n,k,\ell}$ for different values of $n,k,$ and $\ell$.
On the left: $n=12,k=3$, and $\ell=2$. On the right: $n=13,k=3$, and $\ell=2$.
In both figures, the avoided vertices are colored red and underlined. The
edges of $G_{n,k,\ell}$ that do not belong to $G_{n,k}$ are denoted by bold lines.
The edges of $G_{n,k,\ell}$ that belong to $G_{n,k}$ are denoted by ordinary lines.
The edges of $G_{n,k}$ that do not appear in $G_{n,k,\ell}$ are denoted by punctured lines.
The sets $K_+,K_-,$ and $K_0$ are marked in both graphs.}
\label{Fig:9NewNew}
\end{center}
\end{figure}

For ease of exposition, we consider the cases of even and odd $n$ separately.
In Sections~\ref{sec:sub:def-Gnkl} and~\ref{sec:sub:advanced-proof} we present
the construction of $G_{n,k,\ell}$ for even values of $n$ and prove that these CGGs satisfy
the conditions of Theorem~\ref{Thm:Main}(2). In Section~\ref{sec:sub:advanced-odd}
we sketch the modifications in the construction of $G_{n,k,\ell}$ and in
the proof of the theorem required in the case of odd $n$.

\subsection{Construction of the Graphs $G_{n,k,\ell}$ for Even $n$}
\label{sec:sub:def-Gnkl}

For $n=2k+2m$ even, we describe the edges of $G_{n,k,\ell}$ by direction, as follows.
The directions range over the half-circle from $-\ell-m$ up to $2k+m-\ell (=-\ell-m+n)$.
The set of edges of $G_{n,k,\ell}$ in direction $j$ is denoted by $S_j$.
We divide the directions into five sub-ranges:
\begin{enumerate}
\item $-\ell-m \leq j \leq -\ell$,
\item $-\ell \leq j \leq 0$,
\item $0 \leq j \leq \ell$,
\item $\ell \leq j \leq \ell+m$,
\item $\ell+m \leq j \leq 2k+m-\ell$.
\end{enumerate}

The edges in the subranges are defined as follows:

\begin{itemize}
\item \textbf{$j=0$:} For $j=0$, $S_0$ consists of the
$k-\lceil \ell/2 \rceil$ leftmost vertical edges.

\item \textbf{$0 < j \leq \ell$:} We enlarge $K$ by adding $j$ vertices at the bottom.
$S_j$ consists of the $k-\lceil (\ell-j)/2 \rceil$ parallel edges that fit into this
enlarged arc, with one free vertex in the middle if $\ell -j$ is odd.

\item \textbf{$-\ell \leq j < 0$:} Same as above, with ``bottom'' replaced by ``top''
and $j$ replaced by $|j|$.

\item \textbf{$\ell \leq j \leq \ell+m$:} We enlarge $K$ by adding $j$ vertices
at the bottom. $S_j$ consists of the $k$ most central parallel edges that fit into this
enlarged arc, leaving $j-\ell$ free vertices in its center. ($S_{\ell+m}$ is already
central.)

\item \textbf{$-\ell-m \leq j \leq -\ell$:} Same as above, with ``bottom'' replaced by
``top'', and $j$ by $|j|$.

\item \textbf{$\ell+m \leq j < 2k+m-\ell$:} $S_j$ is a $k$-block, central if
$j \equiv \ell+m (\bmod 2)$, and near central if $j \equiv \ell+m+1 (\bmod 2)$. In the
latter case, there are $m+1$ unused vertices of $G$ above $S_j$, and $m-1$ below.
\end{itemize}

The idea behind the construction is the following: For each direction $j$, we consider
the set of allowed edges (i.e., edges with at least one endpoint included in $A^c$).
The edges of this set are consecutive. If their number is at most $k$, we take all of
them to $G_{n,k,\ell}$. If their number exceeds $k$, we take the $k$ edges which are the
closest to the center, and if there are two $k$-blocks with the same distance from the
center, we take the lower one among them.

Another way to view the construction is to compare $G_{n,k,\ell}$ to the corresponding
graph $G_{n,k}$. When moving from $G_{n,k}$ to $G_{n,k,\ell}$, $\ell$ vertices are added
to the forbidden set $A$. As a result, we lose part of the edges that emanate from these
$\ell$ vertices. When defining $G_{n,k,\ell}$, we retain all edges of $G_{n,k}$ that
remain allowed, and add some edges to compensate for the edges
that become forbidden. Specifically, in each direction $j$ in which all edges of $G_{n,k}$ remain
allowed, we take $S_j$ to be the set of edges of $G_{n,k}$ in that direction.
In the directions in which some of the edges
of $G_{n,k}$ become forbidden, we observe that these edges are those
emanating from a set of consecutive vertices. For
each such direction, we remove from the edges of $G_{n,k}$ the edges that become forbidden
and compensate for them by adding a set of consecutive edges on the other
side of the set of $G_{n,k}$. If the number of edges we can add is smaller than the
number of edges that become forbidden (since the direction ``ends'', in a
single vertex or in a boundary edge), we add the maximal possible number
of edges.

As a result, $G_{n,k,\ell}$ satisfies the following, for each $j$:
\begin{itemize}
\item The set of edges of $G_{n,k,\ell}$ in direction
$j$ is consecutive.

\item If at most $k$ edges in direction $j$ are allowed, then $G_{n,k,\ell}$
contains all allowed edges in direction $j$. Otherwise, $G_{n,k,\ell}$
contains $k$ edges in direction $j$.
\end{itemize}

An example of the construction $G_{n,k,\ell}$, for $n=12$, $k=3$,
and $\ell=2$, is presented in the left part of Figure~\ref{Fig:9NewNew}.
In the figure, in directions $-5,-4,-3,-2,4,5,6$ all edges remain allowed,
in directions $-1,2,3$ a single edge becomes forbidden, and in directions $0,1$
two edges become forbidden. On the other hand, a single compensating edge is
added to each of directions $0,1,2,3$. As a result, $G_{12,3,2}$ contains $2$
edges in directions $-1,0,1$, and $3$ edges in each other direction.

\subsection{Proof of Theorem~\ref{Thm:Main}(2)}
\label{sec:sub:advanced-proof}

In this section we present the proof of Theorem~\ref{Thm:Main}(2) in the case
of even $n$.

Assume $n=2m+2k$, $m,k \geq 2$, and $1 \leq \ell < k$. We have to establish
three claims about the graph $G_{n,k,\ell}$ constructed above:
\begin{enumerate}
\item $G_{n,k,\ell}$ is $I_{k+1}$-free.

\item $G_{n,k,\ell}$ has $nk-{{\ell+1}\choose{2}}$ edges.

\item $G_{n,k,\ell}$ has a free boundary arc of order $2m+\ell$.
\end{enumerate}

\textbf{Item~(3)} is obvious, since, by our construction, $A$ is a free arc of $G_{n,k,\ell}$.

\textbf{Item~(2):} The number of edges of $G_{n,k,\ell}$ in direction $j$ is:
$k-\lceil \ell/2 \rceil$ for $j=0$, $k-\lceil (\ell-|j|)/2 \rceil$ for $1 \leq |j| \leq \ell$,
and $k$ for all other $n-(2\ell-1)$ directions. It follows that
\[
e(G_{n,k,\ell}) = nk-2 \sum_{i=1}^{\ell-1} \lceil i/2 \rceil - \lceil \ell/2 \rceil =
nk - \ell(\ell+1)/2,
\]
by Claim~\ref{Claim:Triv}.

\textbf{Item~(1):} This is the main claim. Assume, on the contrary, that $S$ is a set
of $k+1$ pairwise disjoint edges of $G_{n,k,\ell}$. Since $|K_0 \cup K_-| = k$, there
is an edge $e^{+} \in S$ that does not use any vertex of $K_0 \cup K_-$. It follows that
$e^{+}$ is not vertical, that the left vertex of $e^+$ belongs to $K_+$, and that its
right vertex belongs to $K_-$ or to $A$. Among all edges in $S$ whose left endpoint
is in $K_+$, choose the one whose left endpoint is as far to the right as possible
and call it $e_+$. By the construction of $G_{n,k,\ell}$, there are at least $m$
vertices above $e_+$ that are not used by any edge in $S$.

Similarly, there is an edge $e^- \in S$ that does not use any vertex in $K_0 \cup K_+$.
Choose such an edge $e_-$ whose left endpoint is as far to the right as possible. By the
construction of $G_{n,k,\ell}$, there are at least $m-1$ vertices below $e_-$ that are not
used by any edge of $S$. It follows that $S$ uses altogether at most
$n-m-(m-1)=2m+2k-(2m-1)=2k+1$ vertices, a contradiction.



\subsection{Maximal CGGs Avoiding $n-2k+ \ell$ Consecutive Vertices for Odd $n$}
\label{sec:sub:advanced-odd}

The construction of $G_{n,k,\ell}$ in the odd case is almost exactly
the same as in the even case. The only difference in the definition
is in the last sub-range of directions, namely, $\ell +m \leq j< 2k+m-\ell$. In the odd case,
the directions $2k+m-\ell,2k+m-\ell+1$ are added to that sub-range, and (unlike the even case),
$S_j$ is always an almost central $k$-block, such that there are $m$ unused vertices
of $G$ above it if $j \not \equiv \ell+m (\bmod 2)$, and $m+1$ unused vertices
above it if $j \equiv \ell+m (\bmod 2)$.

An example of $G_{n,k,\ell}$, with $n=13,k=3,$ and $\ell=2$, is
presented in the right part of Figure~\ref{Fig:9NewNew}.
In the figure, in directions $-5,-4,-3,-2,5,6,7$ all edges remain allowed,
in directions $-1,3,4$ a single edge becomes forbidden, and in directions $0,1,2$
two edges become forbidden. On the other hand, a single compensating edge is
added to each of directions $0,1,3,4$, and two compensating edges are added
to direction $2$. As a result, $G_{13,3,2}$ contains $2$
edges in directions $-1,0,1$, and $3$ edges in each other direction.

The proof of Theorem~\ref{Thm:Main}(2) in the odd case is almost identical to
the proof in the even case. The only difference is that in the odd case, both
$e_+$ and $e_-$ leave at least $m$ unused vertices of $G$ behind them. This
implies that at most $n-2m=2k+1$ vertices of $G$ are used
by edges of $B$, a contradiction to the assumption that the $k+1$ edges in $B$
are disjoint. This completes the proof of Theorem~\ref{Thm:Main}(2) for odd $n$.


\begin{thebibliography}{99}

\bibitem{AFPS14} E. Ackerman, J. Fox, J. Pach, and A. Suk, On Grids in Topological Graphs,
{\it Comput. Geom.} \textbf{47(7)} (2014), pp.~710--723.

\bibitem{ACFFHHHW10} O. Aichholzer, S. Cabello, R. Fabila-Monroy,
D. Flores-Pe˜\~{n}aloza, T. Hackl, C. Huemer, F. Hurtado, and D. R. Wood,
Edge-Removal and Non-Crossing Configurations in Geometric Graphs,
{\it Disc. Math. Theor. Comput. Sci.,} \textbf{12(1)} (2010), pp.~75--86.


\bibitem{AA89} N. Alon and P. Erd\H{o}s, Disjoint Edges in Geometric Graphs,
{\it Disc. Comput. Geom.} \textbf{4} (1989), pp.~287-–290.

\bibitem{BKV} P. Brass, G. K\'{a}rolyi, and P. Valtr,
A Tur\'{a}n-type Extremal Theory of Convex Geometric Graphs, in:
Discrete and Computational Geometry -- the Goodman-Pollack
Festschrift, B. Aronov et al. eds, Springer-Verlag, 2003,
pp.~275--300.

\bibitem{BMP} P. Brass, W. Moser, and J. Pach, Research Problems
in Discrete Geometry, Springer-Verlag, 2005.

\bibitem{CP92} V. Capoyleas and J. Pach, A Tur\'{a}n-Type Theorem on Chords
of a Convex Polygon, {\it J. Combin. Th. Ser. B} \textbf{56} (1992), pp.~9--15.

\bibitem{Cerny05} J. \v{C}ern\'{y}, Geometric Graphs with no Three Disjoint Edges,
{\it Disc. Comput. Geom.} \textbf{34(4)} (2005), pp.~679–-695.

\bibitem{DKM02} A. Dress, J. H. Koolen, and V. Moulton, On Line Arrangements in the
Hyperbolic Plane, {\it Europ. J. Combin.} \textbf{23} (2002), pp.~549-–557.

\bibitem{Erdos46} P. Erd\H{o}s, On Sets of Distances of $n$ Points,
{\it Amer. Math. Monthly} \textbf{53} (1946), pp.~248-–250.

\bibitem{Felsner} S. Felsner, Geometric Graphs and Arrangements,
Vieweg Verlag, 2004.

\bibitem{FPS13} J. Fox, J. Pach, and A. Suk, The Number of Edges in
$k$-Quasi-Planar Graphs, {\it SIAM J. Disc. Math.} \textbf{27(1)} (2013),
pp.~550--561.

\bibitem{Caterpillar1} F. Harary and A.J. Schwenk, The Number of
Caterpillars, {\it Discrete Math.} \textbf{6} (1973), pp.~359--365.

\bibitem{Jonsson05} J. Jonsson, Generalized Triangulations and Diagonal-Free
Subsets of Stack Polyominoes, {\it J. Combin. Th. Ser. A} \textbf{112} (2005),
pp.~117-–142.

\bibitem{Keller-Perles} C.~Keller and M. A.~Perles, On the
Smallest Sets Blocking Simple Perfect Matchings in a Convex
Geometric Graph, {\it Israel J. Math.} \textbf{187} (2012),
pp.~465--484.

\bibitem{Kupitz} Y.S. Kupitz, On Pairs of Disjoint Segments in
Convex Position in the Plane, Annals Discrete Math., \textbf{20}
(1984), pp.~203--208.

\bibitem{Pach15} J. Pach, Geometric Intersection Patterns and the Theory of
Topological Graphs, preprint, 2015. Available online at:
http://www.cims.nyu.edu/~pach/publications/PachICM032314.pdf.


\bibitem{TV99} G. T\'{o}th and P. Valtr, Geometric Graphs with Few Disjoint
Edges, {\it Disc. Comput. Geom.} \textbf{22(4)}, pp.~633–-642, 1999.

\bibitem{Woodall} D.R. Woodall, Thrackles and Deadlock, in: Combinatorial
Mathematics and its Applications, D.J.A. Welsh, ed., Academic
Press, London, 1971, pp.~335--347.

\end{thebibliography}
\end{document}